  \DeclareSymbolFont{ntxletters}{OML}{ntxmi}{m}{it}
  \re@DeclareMathSymbol{\partial}{\mathord}{ntxletters}{"40}
\definecolor{green}{RGB}{0,127,0}
\definecolor{red}{RGB}{191,0,0}
\def\@settitle{\baselineskip14\p@\relax
    \flushleft{\Large\bfseries \@title}
}
\def\@@and{\&}
\def\@setauthors{%
  \begingroup
  \def\thanks{\protect\thanks@warning}%
  \trivlist
  \centering\footnotesize \@topsep30\p@\relax
  \advance\@topsep by -\baselineskip
  \item\relax
  \author@andify\authors
  \def\\{\protect\linebreak}%
  \flushleft\itshape{by \authors}%
\relax\vskip2\baselineskip%
  \centerline{\swshape Dedicated to Jacek Świątkowski on the occasion of his 60th birthday}%
\ifx\@empty\contribs
  \else
    ,\penalty-3 \space \@setcontribs
    \@closetoccontribs
  \fi
  \endtrivlist
  \endgroup
}
\renewcommand{\uppercasenonmath}[1]{\relax}
\def\@secnumfont{\bfseries}
\def\specialsection{\@startsection{section}{1}%
  \z@{\linespacing\@plus\linespacing}{.5\linespacing}%
  {\normalfont\bfseries\raggedright\boldmath}}
\def\section{\@startsection{section}{1}%
  \z@{.7\linespacing\@plus\linespacing}{.5\linespacing}%
  {\normalfont\bfseries\raggedright\boldmath}}
\def\ps@headings{\ps@empty
  \def\@evenhead{%
    \setTrue{runhead}%
    \normalfont\scriptsize
    \llap{\thepage\ }\hrulefill\ {\itshape\shorttitle}
    \def\thanks{\protect\thanks@warning}}%
  \def\@oddhead{%
    \setTrue{runhead}%
    \normalfont\scriptsize
    \def\thanks{\protect\thanks@warning}%
    {\itshape\authors}\ \hrulefill\rlap{\ \thepage}}%
  \let\@mkboth\markboth
}
\renewcommand{\emph}[1]{{\bfseries\boldmath #1}}
\newtheorem{theorem}[subsection]{Theorem}
\newtheorem{corollary}[subsection]{Corollary}
\newtheorem{lemma}[subsection]{Lemma}
\newtheorem{proposition}[subsection]{Proposition}
\theoremstyle{definition}
\theoremstyle{remark}
\newenvironment{remark}
    {\pushQED{\qed}\remarkx}
    {\popQED\endremarkx}
\numberwithin{equation}{section}
\numberwithin{figure}{section}
\newcommand{\B}[1]{{\mathbf #1}}
\newcommand{\F}[1]{{\mathfrak #1}}
\newcommand{\R}{\mathrm}
\begin{document}

\title{A refinement of bounded cohomology}
\author{Światosław Gal}
\author{Jarek Kędra}



\begin{abstract}
We introduce a refinement of bounded cohomology and prove that the suitable
comparison homomorphisms vanish for an amenable group.
We investigate in this context Thompson's group
$F$ and provide further evidence towards its amenability.
We show that the space
of 1-bounded cocycles of degree two is essentially as big as the space of
Lipschitz functions on the underlying group.  We also explain that such classes
define metrics on the group.  
\end{abstract}

\maketitle

\section{Introduction}\label{S:intro}

Although the cohomology of a space is a homotopy invariant, it sometimes
captures geometric information. For example, existence of a K\"ahler metric on
a $2n$-dimensional manifold $M$ implies that the dimension of the odd degree
cohomology of $M$ is even \cite{zbMATH03634395}. Another impressive example is
simplicial volume, which is a cohomological counterpart of the minimal volume
of a Riemannian manifold. Here, however, one has to put more structure on the
space of cochains.  Namely, consider those that are bounded functions on the
spaces of singular chains. This has been done by Gromov in
\cite{zbMATH03816552} where he introduced the {\em bounded cohomology}. Since
then it became a standard tool in various branches of mathematics like group
theory, differential geometry, dynamical systems and
others~\cite{zbMATH01614276}. Gromov observed that the bounded cohomology of a
space depends only on its fundamental group which, in principle, reduced the
theory to group cohomology. 

It is well known that the bounded cohomology of an amenable group is trivial
\cite[Section 3]{zbMATH03816552}. If one looks closely at the proof of this
fact \cite{zbMATH06821283,zbMATH05304367}, it is clear that the standard
averaging argument yields more. Not only bounded cocycles become coboundaries
but also cocycles that are bounded after fixing a number of variables do so.
This observation (presented in the proof of Theorem \ref{T:comparison} below)
led us to defining semibounded cohomology which is the main subject of this
paper. 

We investigate the properties of semibounded cohomology in relation to other
geometric properties of spaces or groups. For example, we show that a
cohomology class of a group is hyperbolic if and only if it admits a
$1$-bounded representative; see Section \ref{S:hyp classes} for definitions and
more details.

It is well known that the kernel of the comparison map $H^2_b(\Gamma;\B R)\to
H^2(\Gamma;\B R)$ consists of quasimorphisms and is often an infinite
dimensional space. This holds, for example, for non-elementary hyperbolic groups, mapping
class group which are not virtually abelian and many others \cite{zbMATH01928904,zbMATH05658119}.
Quasimorphisms on a group $\Gamma$  form a function theory
tightly related to the geometry of the stable commutator length on the commutator
subgroup $[\Gamma,\Gamma]$; see \cite{zbMATH05577332} for more details. In this
spirit we show in Section \ref{S:abstractly-lipschitz} that the kernel of the
comparison map between 1-bounded degree 2 cohomology and the standard
cohomology consist of functions which we call abstractly Lipschitz. These are
functions for which there exists a length on $\Gamma$ with respect to which the
function is Lipschitz.

A similar concept to 1-bounded cocycles was defined by Neumann and
Reeves~\cite{zbMATH00991907} for 2-cocycles and by Frigerio and
Sisto~\cite{zbMATH07734709} for any degree under the name {\itshape weakly bounded}
cocycles.  Both papers consider cocycles that take finite set of values
after fixing all but the first argument.

Weakly bounded 2-cocycles were used to characterise central extensions of
finitely generated groups that are quasi-isometric with trivial extensions.
It was a folklore that this holds if the defining cocycle for an
extension is bounded.  Necessary and sufficient condition in terms of
$\ell^\infty$-cohomology (see \cite{arxiv.2107.09089}) was given by
Kleiner and Leeb \cite[Proposition 8.3]{zbMATH01652733}.  The characterisation in
terms of weakly bounded cocycles is due to Frigerio and Sisto \cite[Corollary
2.5]{zbMATH07734709}.
The connection between weakly bounded cocycles and $\ell^\infty$-cohomology
is a direct analogy to our Theorem~\ref{T:iso-bockstein}.

It is a longstanding open problem (popularised by R. Geoghegan around 1979) to
determine whether Thopson's group $F$ is amenable
\footnote{
A short historical account is given here: 
\url{https://mathoverflow.net/questions/55214/does-the-amenability-problem-for-thompsons-group-f-predate-1980}
}.
It is a recent result of Monod that the bounded cohomology of $F$ vanishes
\cite{zbMATH07536483} and in Section \ref{S:test} we apply our refined theory
to test the amenability of $F$. Unfortunately, we prove that no class in the
standard cohomology of $F$ can be represented by a 1-bounded cocycle.  This
shows that the comparison homomorphisms between semibounded and the standard
cohomology are trivial and, as in the case of Monod's vanishing result, provides
inconclusive answer as to whether $F$ is amenable. The remaining line of attack
within our framework is to investigate the comparison homomorphisms between
various semibounded cohomology of $F$.  However, they seem to be very hard to
compute.

\bigskip

Throughout the paper when  we consider group cohomology we use nonhomogeneous
notation for chains \cite{zbMATH05056625} with exceptions in Section \ref{S:hyp
classes}.


\section{Semibounded cohomology}\label{S:semi-bdd-coho}
Let $\Gamma$ be a discrete group.
An $n$-cochain
$\omega \colon \Gamma^n\to \B R$ is called \emph{$p$-bounded} if for fixed
$g_{p+1},\dots,g_{n}\in \Gamma$ the function
$$
\Gamma^p\ni g_1,\dots,g_p \mapsto \omega(g_1,\dots,g_p,g_{p+1},\dots,g_n)
$$
is bounded. For example, an $n$-bounded cochain is bounded in the usual sense.
It is straightforward to check that
$C^*_{(p)}(\Gamma;\B R)\subseteq C^*(\Gamma;\B R)$
consisting of $p$-bounded cochains is preserved by the codifferential
and hence it is a subcomplex. Indeed, if 
$\alpha \in C^{n}_{(p)}(\Gamma;\B R)$ is a $p$-bounded $n$-chain and
$g_{p+1},\ldots,g_{n+1}\in \Gamma$ are fixed elements then
the following computation shows that $\delta\alpha$ is $p$-bounded.
\begin{align*}
\delta\alpha(g_1,\ldots,g_{n+1})
&=\alpha(g_2,\ldots,g_{p+1},\textcolor{blue}{g_{p+2},\ldots,g_{n+1}})\\
&\hphantom{=}+ \sum_{k=1}^{p} (-1)^k \alpha(g_1,\ldots,g_kg_{k+1},\ldots,g_{p+1},
\textcolor{blue}{g_{p+2},\ldots,g_{n+1}})\\
&\hphantom{=}+ \sum_{k=p+1}^{n} (-1)^k \alpha(g_1,\ldots,g_p,
\textcolor{blue}{g_{p+1},\ldots,g_kg_{k+1},\ldots,g_{n+1}})\\
&\hphantom{=}
+(-1)^{n+1} \alpha(g_1,\ldots,g_p,\textcolor{blue}{g_{p+1},\ldots,g_n}).
\end{align*}

The cohomology of the complex $C^*_{(p)}(\Gamma;\B R)$ 
will be denoted by $H^n_{(p)}(\Gamma;\B R)$.
The inclusions $C^*_{(p)}(\Gamma;\B R)\subseteq C^*_{(p-1)}(\Gamma;\B R)$
induce maps in cohomology 
$$
H^n_{(n)}(\Gamma;\B R)\to H^n_{(n-1)}(\Gamma;\B R)
\to \cdots \to H^n_{(1)}(\Gamma;\B R) \to H^n_{(0)}(\Gamma;\B R)=H^n(\Gamma;\B R),
$$
called  \emph{comparison homomorphisms}. 
Notice that $H^n_{(n)}(\Gamma;\B R)=H^n_b(\Gamma;\B R)$.
In what follows, we will refer to $H^n(\Gamma;\B R)$ as the standard cohomology. 

Recall that a group $\Gamma$ is called \emph{amenable} if 
it admits a left-invariant \emph{mean}. That is,
a linear functional $\ell^{\infty}\Gamma\to \B R$, on the left module
of bounded functions on $\Gamma$, such that
\begin{enumerate}
\item $\int F(gh)\F m(g) = \int F(g)\F m(g)$ for all $h\in \Gamma$ and 
all $F\in \ell^{\infty}\Gamma$;
\item $\int 1\F m(g) = 1$;
\item if $F\geq 0$ almost everywhere then $\int F(g)\F m(g)\geq 0$.
\end{enumerate}

It is well known \cite[Section 3.0]{zbMATH03816552} that if a group $\Gamma$ is
amenable then its bounded cohomology vanishes in positive degrees.  In
particular, the comparison homomorphisms between bounded and standard
cohomology are trivial. We generalise this result to the $p$-bounded
cohomology.

\begin{theorem}\label{T:comparison}
If\/ $\Gamma$ is an amenable group then the comparison homomorphisms
$$
H^n_{(p)}(\Gamma;\B R)\to H^n_{(p-1)}(\Gamma;\B R)
$$
vanish for all $n>0$ and $p=1,2,\dots, n$.
\end{theorem}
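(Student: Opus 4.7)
The plan is to imitate the classical mean-averaging argument that kills bounded cohomology of an amenable group, but to apply the mean only to the first variable of the cocycle; since $p$-boundedness with $p\geq 1$ already forces the cochain to be bounded in its first argument, this single averaging should suffice to trade $p$-boundedness for $(p-1)$-boundedness.

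Concretely, let $m$ be a left-invariant mean on $\Gamma$ and $\omega\in C^n_{(p)}(\Gamma;\B R)$ a cocycle. Once $h_p,\dots,h_{n-1}$ are fixed, $p$-boundedness of $\omega$ gives a uniform bound $|\omega(g,h_1,\dots,h_{n-1})|\leq M$ on $\Gamma^p$, so the formula
$$\eta(h_1,\dots,h_{n-1}) := m_g\bigl(\omega(g,h_1,\dots,h_{n-1})\bigr)$$
defines a cochain and the same constant $M$ bounds $\eta$ independently of $h_1,\dots,h_{p-1}$. This shows $\eta\in C^{n-1}_{(p-1)}(\Gamma;\B R)$.

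The remaining step is to verify $\delta \eta = \pm\omega$. Starting from the cocycle identity $(\delta \omega)(g_0,g_1,\dots,g_n)=0$, I would solve for $\omega(g_1,\dots,g_n)$ and apply $m_{g_0}$ term by term. Left-invariance of $m$ converts the $i=1$ summand $m_{g_0}\omega(g_0g_1,g_2,\dots,g_n)$ into $\eta(g_2,\dots,g_n)$, while each of the remaining terms $m_{g_0}\omega(g_0,\dots,g_{i-1}g_i,\dots,g_n)$ and $m_{g_0}\omega(g_0,g_1,\dots,g_{n-1})$ becomes the corresponding summand of $(\delta \eta)(g_1,\dots,g_n)$; a routine check that the signs $(-1)^{i+1}$ and $(-1)^{i-1}$ agree closes the identification $\omega=\delta\eta$ and shows that $[\omega]$ vanishes in $H^n_{(p-1)}(\Gamma;\B R)$.

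I expect the only real obstacle to be organising the signs and index shifts in this last identification; the amenability input is minimal, and the definition of $p$-boundedness is calibrated precisely so that averaging a single ``bounded'' variable lowers the boundedness index by one.
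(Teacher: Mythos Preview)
Your argument is essentially the paper's own proof: the paper defines the same averaging operator $M\omega(g_1,\dots,g_n)=\int\omega(h,g_1,\dots,g_n)\,\mathfrak m(h)$ and verifies $M\delta+\delta M=\mathrm{id}$, of which your computation is the special case $\delta\omega=0$. One slip: the identity $m_{g_0}\bigl(\omega(g_0g_1,g_2,\dots)\bigr)=\eta(g_2,\dots)$ uses \emph{right}-invariance of the mean (the substitution is $g_0\mapsto g_0g_1$), not left-invariance; since amenability provides a right-invariant mean this is harmless, but you should state it correctly.
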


\begin{proof}
The argument follows the proof of the fact that bounded cohomology
of an amenable group vanishes in positive degree.
Let $\F m$ be a right-invariant mean on $\Gamma$.  Define a map
$M \colon C^{n+1}_{(p)}(\Gamma;\B R)\to C^n_{(p-1)}(\Gamma;\B R)$ by
$$
M\omega(g_1,\dots,g_n)=\int\omega(h,g_1,\dots,g_n)\,\F m(h).
$$
To see that $M\omega$ is $(p-1)$-bounded fix $g_p,\ldots,g_n$ and
observe that the function
$h\mapsto \omega(h,g_1,\ldots,g_{p-1},g_p,\ldots,g_n)$
is bounded uniformly for any choice of $g_1,\ldots,g_{p-1}$. 
This shows $(p-1)$-boundedness of $M\omega$.

Then
\begin{align*}
M\delta\omega(g_1,\dots,g_n)
&=\int\delta\omega(h,g_1,\dots,g_n)\,\F m(h)\\
&=\int\omega(g_1,\dots,g_n)\F m(h)\\
&\hphantom{=}-\int\omega(hg_1,g_2,\dots,g_n)\,\F m(h)+\int\omega(h,g_1g_2,\dots,g_n)\,\F m(h)-\dots\\
&\hphantom{=}\dots\pm \int\omega(h,g_1,\ldots,g_{n-1})\,\F m(h)\\
&\overset{!}=\omega(g_1,\dots,g_n)\\
&\hphantom{=}-\int\omega(h,g_2,\dots,g_n)\,\F m(h)+\int\omega(h,g_1g_2,\dots,g_n)\,\F m(h)-\dots\\
&\hphantom{=}\dots\pm \int\omega(h,g_1,\ldots,g_{n-1})\,\F m(h)\\
&=\omega(g_1,\dots,g_n)-M\omega(g_2,\dots,g_n)+M\omega(g_1g_2,\dots,g_n)-\dots\\
&\hphantom{=}\dots\pm M\omega(g_1,\ldots,g_{n-1})\\
&=\omega(g_1,\dots,g_n)-\delta M\omega(g_1,\dots,g_n),
\end{align*}
where, in the marked equality, we used the invariance of the mean 
in the second term.
We obtain that
$$
M\delta\omega+\delta M\omega=\omega=i(\omega),
$$
where $i\colon C^n_{(p)}(\Gamma;\B R)\to C^n_{(p-1)}(\Gamma;\B R)$ is the inclusion
of $p$-bounded cochains into $(p-1)$-bounded ones.
Consequently, the map $M$ is a homotopy for the comparison map.
\end{proof}


\begin{remark}
Notice that we only prove the vanishing of the comparison maps. We don't know
whether $p$-bounded cohomology of an amenable group vanishes for $p\neq n$.
\end{remark}

\begin{remark}  One could use another definition of $p$-boundedness.
Namely, an $n$-cochain
$\omega \colon \Gamma^n\to \B R$ is called \emph{left-$p$-bounded} if for fixed
$g_1,\dots,g_{n-p}\in \Gamma$ the function
$$
\Gamma^p\ni g_{n-p+1},\dots,g_n \mapsto \omega(g_1,\dots,g_{p-n},g_{p-n+1},\dots,g_n)
$$
is bounded.  Such definition (of 1-bounded 2-cocycle) appears in \cite[Section 4]{zbMATH06065373}
under the name of \emph{semibounded} cocycle, where the authors use it
to prove Polterovich's theorem stating that
on the group of symplectic diffeomorphisms of a symplectically hyperbolic
manifold all its elements are undistorted \cite[Theorem 5.2]{zbMATH06065373}.

Nevertheless, the involution
$$\widehat\omega(g_1,\dots,g_n)=\omega(g_n^{-1},\dots,g_1^{-1})
$$
is a chain map that interchanges $p$-bounded and left-$p$-bounded cocycles
(with trivial coefficients), so both notions define isomorphic objects.
\end{remark}


\section{The 1-bounded cohomology}\label{L:last}

Let $\ell^\infty\Gamma$ denote the left module of bounded functions on $\Gamma$
with the action defined by $(hF)(g)=F(gh)$.

\begin{lemma}\label{L:lgamma}
The map $\Lambda_0\colon C^n_{(1)}(\Gamma;\B R)\to C^{n-1}(\Gamma;\ell^\infty\Gamma)$
given by the formula
$$
\Lambda_0\omega(g_1,\dots,g_n)(h)=\omega(h,g_1,\dots,g_n)
$$
is a homotopy, \textit{i.e.} $\delta\Lambda_0\omega+\Lambda_0\delta\omega=\omega$.
\end{lemma}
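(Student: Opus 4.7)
The plan is to verify the homotopy identity $\delta\Lambda_0\omega+\Lambda_0\delta\omega=\omega$ by a direct expansion of both coboundaries and matching terms. First I would note that $\Lambda_0$ is well defined precisely because $\omega$ is $1$-bounded: for every fixed $(g_1,\dots,g_{n-1})$ the function $h\mapsto\omega(h,g_1,\dots,g_{n-1})$ is bounded, so $\Lambda_0\omega(g_1,\dots,g_{n-1})\in\ell^\infty\Gamma$. The structural observation is that $\delta\Lambda_0\omega$ is computed with the left $\Gamma$-action $(hF)(g)=F(gh)$ on $\ell^\infty\Gamma$, whereas $\Lambda_0\delta\omega$ comes from a coboundary with trivial coefficients; the asymmetry between these two formulas is what produces the surviving $\omega$.

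Unpacking $\delta\Lambda_0\omega(g_1,\dots,g_n)(h)$, the module-action contribution is
$$(g_1\cdot\Lambda_0\omega(g_2,\dots,g_n))(h)=\Lambda_0\omega(g_2,\dots,g_n)(hg_1)=\omega(hg_1,g_2,\dots,g_n),$$
and the remaining summands are $\sum_{i=1}^{n-1}(-1)^i\omega(h,g_1,\dots,g_ig_{i+1},\dots,g_n)+(-1)^n\omega(h,g_1,\dots,g_{n-1})$. Analogously, $\Lambda_0\delta\omega(g_1,\dots,g_n)(h)=\delta\omega(h,g_1,\dots,g_n)$ expands, via the trivial-coefficient coboundary, to $\omega(g_1,\dots,g_n)-\omega(hg_1,g_2,\dots,g_n)+\sum_{i=1}^{n-1}(-1)^{i+1}\omega(h,g_1,\dots,g_ig_{i+1},\dots,g_n)+(-1)^{n+1}\omega(h,g_1,\dots,g_{n-1})$; the only summand independent of $h$ is the leading term $\omega(g_1,\dots,g_n)$.

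Adding the two expressions, the module-action contribution $+\omega(hg_1,g_2,\dots,g_n)$ cancels the merging term $-\omega(hg_1,g_2,\dots,g_n)$, and every remaining pair of terms cancels because the signs $(-1)^i$ and $(-1)^{i+1}$ are opposite. What survives is the constant function $h\mapsto\omega(g_1,\dots,g_n)$, which is the image of $\omega\in C^n_{(1)}(\Gamma;\B R)$ under the $\Gamma$-equivariant embedding $\B R\hookrightarrow\ell^\infty\Gamma$ as constants (fixed by the action $(hF)(g)=F(gh)$). The whole argument is essentially sign bookkeeping; the only real obstacle is ensuring that the sign conventions for the module-valued and the trivially-valued coboundary formulas are consistent, so that the module-action term on the $\ell^\infty\Gamma$ side precisely absorbs the first ``mixed'' term of the trivial-coefficient coboundary of $\omega$.
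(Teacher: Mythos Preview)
Your proof is correct and follows essentially the same direct computation as the paper: expand the inhomogeneous coboundary $\delta\omega(h,g_1,\dots,g_n)$, identify the term $\omega(hg_1,g_2,\dots,g_n)$ with the module-action term of $\delta\Lambda_0\omega$, and observe that all remaining terms cancel in pairs, leaving $\omega(g_1,\dots,g_n)$. The only cosmetic difference is that the paper expands $\Lambda_0\delta\omega$ alone and recognises the tail as $-\delta\Lambda_0\omega$, whereas you expand both sides separately and add; your extra remark that $1$-boundedness is exactly what makes $\Lambda_0\omega$ land in $\ell^\infty\Gamma$ is a welcome clarification.
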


\begin{proof}
\begin{align*}
\Lambda_0\delta\omega(g_0,\dots,g_n)(h)&=\delta\omega(h,g_0,\dots,g_n)\\
&=\omega(g_0,\dots,g_n)\\
&\hphantom{=}-\omega(hg_0,\dots,g_n)+\omega(h,g_0g_1,\dots,g_n)\pm\dots\\
&=\omega(g_0,\dots,g_n)\\
&\hphantom{=}-\Lambda_0\omega(g_1,\dots,g_n)(hg_0)+\Lambda_0\omega(g_0g_1,\dots,g_n)(h)\pm\dots\\
&=\omega(g_0,\dots,g_n)-\delta\Lambda_0\omega(g_0,\dots,g_n)(h).
\end{align*}
\end{proof}

Let $\Lambda\colon C^n_{(1)}(\Gamma;\B R)\to C^{n-1}(\Gamma;\ell^\infty\Gamma/\B R)$
be the composition of $\Lambda_0$ followed by the map
$C^{n-1}(\Gamma;\ell^{\infty}\Gamma) \to C^{n-1}(\Gamma;\ell^{\infty}\Gamma/\B R)$
induced by the quotient of the coefficients.
The map $\Lambda$ is surjective and anticommutes with the codifferential.  
Its kernel consists of cocycles that do not depend on the first variable.
We call them \emph{1-constant} and denote by $C^n_{[1]}(\Gamma,\B R)$.

\begin{lemma}\label{L:1-const}
The complex of 1-constant cochains with restricted codifferential is acyclic.
\end{lemma}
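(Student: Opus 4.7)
The strategy is to exhibit an explicit contracting chain homotopy on $(C^*_{[1]}(\Gamma;\mathbf{R}),\delta)$. For $\omega\in C^n_{[1]}$ with $n\ge 1$, I propose to define
$$
s\omega(g_1,\dots,g_{n-1}) := \omega(g_1,e,g_2,\dots,g_{n-1}),
$$
that is, insert the identity $e\in\Gamma$ as the second argument of $\omega$; for $n=1$ this is read as $s\omega:=\omega(e)\in\mathbf{R}$. The target is the homotopy identity $s\delta+\delta s=\mathrm{id}$, from which acyclicity follows by the usual contractibility argument: any cocycle $\omega$ satisfies $\omega=\delta(s\omega)$.

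That $s\omega$ itself lies in $C^{n-1}_{[1]}$ is automatic: its dependence on $g_1$ is only through the (irrelevant) first argument of $\omega$. The content of the lemma is therefore the homotopy identity, which I would verify by expanding both $s\delta\omega(g_1,\dots,g_n)=\delta\omega(g_1,e,g_2,\dots,g_n)$ and $\delta s\omega(g_1,\dots,g_n)$ via the standard coboundary formula. On the $s\delta\omega$ side, the first three summands are $\omega(e,g_2,\dots,g_n)$, $-\omega(g_1\cdot e,g_2,\dots,g_n)$, and $+\omega(g_1,e\cdot g_2,g_3,\dots,g_n)$; the first equals the negative of the second by 1-constancy of $\omega$, so those cancel, and the third collapses to $+\omega(g_1,g_2,\dots,g_n)$ because $e\cdot g_2=g_2$ -- this is the term that will survive. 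On the $\delta s\omega$ side, the face-$0$ and face-$1$ contributions evaluate to $+\omega(g_2,e,g_3,\dots,g_n)$ and $-\omega(g_1g_2,e,g_3,\dots,g_n)$, which likewise cancel by 1-constancy of $\omega$.

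After these initial simplifications, the remaining interior terms on each side have the shape $(-1)^?\omega(g_1,e,g_2,\dots,g_jg_{j+1},\dots,g_n)$ with $2\le j\le n-1$, together with a terminal ``drop the last argument'' term carrying sign $(-1)^{n+1}$ in $s\delta\omega$ and $(-1)^n$ in $\delta s\omega$. A single index shift ($i=j+1$) aligns the two interior sums with opposite signs, and the terminal pair cancels by opposite parity; what survives is exactly $\omega(g_1,\dots,g_n)$, as desired. The only real obstacle is bookkeeping -- keeping the index shift straight and invoking 1-constancy at precisely the two points above where it is needed.
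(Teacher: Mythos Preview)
Your proposal is correct and essentially identical to the paper's argument: the paper writes the contracting homotopy as $H\omega(g_2,\dots,g_n)=\omega(1,1,g_3,\dots,g_n)$, which, since $\omega$ is $1$-constant, coincides with your $s\omega(g_1,\dots,g_{n-1})=\omega(g_1,e,g_2,\dots,g_{n-1})$. Your bookkeeping is accurate, and you give more detail than the paper (which simply states the formula for $H$ and leaves the verification implicit).
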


\begin{proof}
We need to show that $\delta$ preserves $1$-constant cochains. That is,
if $\Lambda\omega=0$ then $\Lambda\delta\omega=0$. This is equivalent to
showing that if for any $g'_1,\ldots,g'_{n-1}\in \Gamma$ the function
$\Lambda_0\omega(g'_1,\ldots,g'_{n-1})$ is constant then so is the function
$\Lambda_0\delta\omega(g_1,\ldots,g_{n})$ for any 
$g_1,\ldots,g_{n+1}\in \Gamma$. According to Lemma \ref{L:lgamma}, we have
$\Lambda_0\delta\omega = \omega - \delta\Lambda_0\omega$. 
Evaluating both sides on $g_1,\ldots,g_n$ we get that both
terms on the right hand side are constant, which proves that $1$-constant
cochains form a subcomplex.

The map $H$ defined by
$$
H\omega(g_2,\dots,g_n)=\omega(1,1,g_3,\dots,g_n)
$$
is a homotopy proving acyclicity. Indeed, we have
$$
H\delta\omega(g_1,\ldots,g_n) + \delta H\omega(g_1,\ldots,g_n)
=\omega(1,g_2,\ldots,g_n).
$$
Since $h\mapsto \omega(h,g_2,\ldots,g_n)$ is a constant function,
the right hand side of the above equality is equal to $\omega(g_1,\ldots,g_n)$.
\end{proof}

Consider the following short exact sequence
$$
0\to C^n_{[1]}(\Gamma,\B R)\to C^n_{(1)}(\Gamma;\B R)\to C^{n-1}(\Gamma;\ell^\infty\Gamma/\B R)\to0
$$
and the induced long exact sequence in cohomology.  As a corollary, we
deduce the following theorem.

\begin{theorem}\label{T:iso-bockstein}
The map induced by $\Lambda$,
$$
H^n_{(1)}(\Gamma;\B R)\to H^{n-1}(\Gamma;\ell^\infty\Gamma/\B R)
$$
is an isomorphism.
Its composition with the Bockstein homomorphism
$$
H^{n-1}(\Gamma;\ell^\infty\Gamma/\B R)\to H^n(\Gamma;\B R)
$$
corresponding to the short exact sequence of coefficients
$\B R\to \ell^\infty\Gamma\to\ell^\infty\Gamma/\B R$
is equal to the comparison map $H^n_{(1)}(\Gamma;\B R)\to H^n(\Gamma;\B R)$.
\end{theorem}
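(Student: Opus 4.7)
The plan is to extract both claims from the short exact sequence just displayed together with the two preceding lemmas. For the isomorphism part, I would interpret that sequence as a short exact sequence of cochain complexes (with the rightmost term reindexed so that $C^{n-1}(\Gamma;\ell^\infty\Gamma/\B R)$ sits in degree $n$, and with a sign twist absorbing the anticommutation of $\Lambda$ with $\delta$) and write down the associated long exact sequence in cohomology. Lemma~\ref{L:1-const} makes every cohomology group of the left-hand complex vanish, so the long exact sequence collapses into isomorphisms
$$
H^n_{(1)}(\Gamma;\B R)\xrightarrow{\Lambda} H^{n-1}(\Gamma;\ell^\infty\Gamma/\B R)
$$
in every degree $n$, and by construction the isomorphism is exactly the map induced by $\Lambda$.

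For the identification of the composition with the comparison map, I would trace a cocycle $\omega\in C^n_{(1)}(\Gamma;\B R)$ through the snake-lemma definition of the Bockstein. The key observation is that Lemma~\ref{L:lgamma} provides a canonical lift of $\Lambda\omega$ to $C^{n-1}(\Gamma;\ell^\infty\Gamma)$, namely $\Lambda_0\omega$, whose coboundary is $\delta\Lambda_0\omega=\omega-\Lambda_0\delta\omega$. Since $\omega$ is a cocycle, this reduces to $\delta\Lambda_0\omega=\omega$, and since $\omega$ already takes values in $\B R\subset\ell^\infty\Gamma$, the Bockstein of $[\Lambda\omega]$ is represented by $\omega$ viewed inside $C^n(\Gamma;\B R)$. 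That is precisely the comparison image of $[\omega]$, so the two maps agree on the level of cocycles and therefore on cohomology.

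The only step that really demands care is the sign and degree-shift bookkeeping in the first paragraph: one has to verify that, after the sign twist used to make $\Lambda$ a chain map, the connecting homomorphism of the long exact sequence is genuinely induced by $\Lambda$ (rather than by its negative, or by some awkwardly desuspended variant). This is routine homological algebra with no input from $\Gamma$, but it is the only point at which the proof is not a direct citation of Lemmas~\ref{L:lgamma} and~\ref{L:1-const}.
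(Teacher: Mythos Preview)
Your proposal is correct and matches the paper's own argument essentially verbatim: the isomorphism is read off from the long exact sequence of the displayed short exact sequence together with Lemma~\ref{L:1-const}, and the Bockstein identification is obtained by lifting $\Lambda\omega$ to $\Lambda_0\omega$ and invoking Lemma~\ref{L:lgamma} to get $\delta\Lambda_0\omega=\omega$. The paper does not even spell out the sign/degree-shift bookkeeping you flag, so your write-up is if anything slightly more careful.
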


\begin{proof}
Let $\omega \in C^n_{(1)}(\Gamma;\B R)$ be a cocycle. 
Then $\Lambda_0(\omega)\in C^{n-1}(\Gamma;\ell^{\infty}\Gamma)$ is
the lift of $\Lambda(\omega)\in C^{n-1}(\Gamma;\ell^{\infty}\Gamma/\B R)$.
By definition \cite[Section 3.E]{zbMATH02103273}, the Bockstein homomorphism sends $[\Lambda(\omega)]$ to
$[\delta\Lambda_0(\omega)]$.  Since $\omega$ is a cocycle, it follows from
Lemma \ref{L:lgamma} that 
$\delta\Lambda_0(\omega) = \omega - \Lambda_0(\delta\omega) = \omega$,
which proves the statement.
\end{proof}

Notice that a group $\Gamma$ is amenable if and only if the short exact
sequence of coefficients $0\to\B R\to\ell^\infty\Gamma\to\ell^\infty\Gamma/\B R\to 0$ splits.
Indeed, if $s\colon \ell^{\infty}\Gamma/\B R \to \ell^{\infty}\Gamma$ is a section then
$F\mapsto F - s(F\cdot \B R)$ is a mean. Conversely, if there exists a mean then
$s[F] = F - \int F(g)\F m(g)$ is a well defined section.
In such a case the Bockstein map is trivial in accordance with Theorem \ref{T:comparison}.


\section{1-bounded 2-cocycles}\label{S:abstractly-lipschitz}

In this section we discuss two constructions of pseudometrics on a group
defined by 1-bounded 2-cocycles (in what follows, we will abuse terminology
and refer to them as metrics).  The first metric is defined by cocycles that
are trivial in the standard cohomology.  The second construction is defined for
arbitrary cocycles.

We start with the first construction.
Recall, that any bounded 2-cocycle in the kernel of the comparison homomorphism
$H^2_b(\Gamma;\B R)\to H^2(\Gamma;\B R)$ is a codifferential of a
quasimorphism, \textit{i.e.}, a function $\phi\colon\Gamma\to\B R$ such that
$\sup_{g,h\in\Gamma}|\phi(g)-\phi(gh)+\phi(h)|<\infty$.  
Thus the kernel of the comparison map can be identified with the space of 
quasimorphisms divided by the sum of the space of homomorphisms 
(annihilated by the codifferential) and the space of bounded functions
(which correspond to coboundaries). See Calegari \cite[Chapter 2]{zbMATH05577332} 
for a survey.

Similarly, any 1-bounded 2-cocycle in the kernel of the comparison homomorphism
$H^2_{(1)}(\Gamma;\B R)\to H^2(\Gamma;\B R)$
is a codifferential of a function $\phi\colon\Gamma\to\B R$ such that
for each fixed $h\in\Gamma$ we have $\sup_{g\in\Gamma}|\phi(g)-\phi(gh)+\phi(h)|<\infty$. That is, 
$$
\|h\phi-\phi\|_0
:=\sup_{g\in\Gamma}|\phi(gh)-\phi(g)|<\infty.
$$  
We call such functions \emph{abstractly Lipschitz}.
The above argument can be then summed up as follows.
\begin{proposition}
The kernel of the comparison map $H^2_{(1)}(\Gamma;\B R)\to H^2(\Gamma;\B R)$
is isomorphic to the quotient on abstractly Lipschitz functions divided
by the sum of the space of homomorphisms and the space of bounded functions.\qed
\end{proposition}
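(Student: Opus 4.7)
The plan is to exhibit an explicit isomorphism $[\phi]\mapsto[\delta\phi]$ from the quotient of the space of abstractly Lipschitz functions by the subspace spanned by homomorphisms and bounded functions onto the kernel of the comparison map. I would proceed in three steps: first check that $\delta\phi$ is $1$-bounded exactly when $\phi$ is abstractly Lipschitz, then verify surjectivity, and finally compute the kernel of the induced map.

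For the core calculation, I would expand
$$
\delta\phi(g_1,g_2)=\phi(g_2)-\phi(g_1g_2)+\phi(g_1).
$$
For fixed $g_2$ the first summand is constant in $g_1$, so $1$-boundedness of $\delta\phi$ is equivalent to boundedness of $g_1\mapsto \phi(g_1)-\phi(g_1g_2)$ for every $g_2$, which is precisely the abstractly Lipschitz condition. This simultaneously shows that $\delta\phi\in C^2_{(1)}(\Gamma;\B R)$ whenever $\phi$ is abstractly Lipschitz, in which case $[\delta\phi]$ lies in the kernel of the comparison map as a standard coboundary. Conversely, if $\omega$ is a $1$-bounded $2$-cocycle whose class is trivial in $H^2(\Gamma;\B R)$, then $\omega=\delta\phi$ for some function $\phi$, and the same computation forces $\phi$ to be abstractly Lipschitz. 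Surjectivity follows.

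For the kernel of $\phi\mapsto[\delta\phi]$, suppose $[\delta\phi]=0$ in $H^2_{(1)}(\Gamma;\B R)$, so $\delta\phi=\delta\psi$ for some $1$-bounded $1$-cochain $\psi$. But in degree $1$ the condition of $1$-boundedness is just boundedness, so $\psi\in\ell^\infty(\Gamma)$. Then $\delta(\phi-\psi)=0$, i.e., $\phi-\psi$ is a homomorphism, and hence $\phi$ lies in $\ell^\infty(\Gamma)+\mathrm{Hom}(\Gamma,\B R)$. Passing to the quotient yields the claimed isomorphism.

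The argument is essentially routine: the only point that merits some attention is matching the definition of $p$-boundedness (with $n=2$, $p=1$) with the abstractly Lipschitz condition on a primitive $\phi$, and this is precisely the elementary computation already sketched in the paragraph preceding the statement, so I do not anticipate any genuine obstacle.
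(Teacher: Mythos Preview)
Your proposal is correct and is essentially the same argument as the paper's own. The paper treats the proposition as a summary of the preceding paragraph (hence the \qed): a class in the kernel of the comparison map is represented by some $\delta\phi$, and the $1$-boundedness of $\delta\phi$ is exactly the abstractly Lipschitz condition on $\phi$; modding out by homomorphisms (kernel of $\delta$) and bounded functions ($1$-bounded coboundaries) gives the isomorphism. Your write-up simply makes the well-definedness, surjectivity, and kernel steps explicit.
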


On the other hand, given abstractly Lipschitz function $\phi$, the map
$\Phi\colon \Gamma \to \ell^{\infty}\Gamma$ given by $\Phi(g) = g\phi - \phi$
is a cocycle. Indeed, we have
\begin{align*}
\delta \Phi(g,h) &= g\Phi(h) - \Phi(gh) + \Phi(g)\\
&= g\left( h\phi-\phi \right) - gh\phi + \phi + g\phi -\phi\\
&= gh\phi - g\phi - gh\phi +g\phi = 0.
\end{align*}
Taking the class of $\Phi$ in $H^1(\Gamma;\ell^\infty\Gamma/\B R)\cong H^2_{(1)}(\Gamma;\B R)$ 
corresponds to taking class of $\phi$ modulo
homomorphisms (constant cocycles) and bounded functions (coboundaries).

The name {\itshape abstractly Lipschitz} is motivated by the following observation.

\begin{proposition}
Let $\phi$ be a function on a group $\Gamma$.  
The following are equivalent
\begin{enumerate}
\item $\phi$ is abstractly Lipschitz,
\item there exists a length function $|\cdot|$ on $\Gamma$, 
such that $\phi$ is Lipschitz with respect to $|\cdot|$.
\end{enumerate}
\end{proposition}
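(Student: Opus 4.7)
The plan is to take both directions of the equivalence in turn, using as the candidate length function the seminorm $\|g\phi-\phi\|_0$ appearing in the definition of abstract Lipschitz, and using the right-invariant metric $d(g,h)=|gh^{-1}|$ associated to a length function. I will first fix conventions: a length function $|\cdot|\colon\Gamma\to[0,\infty)$ satisfies $|e|=0$, $|g|=|g^{-1}|$, and $|gh|\le|g|+|h|$; Lipschitz is understood with respect to the induced right-invariant metric so that $d(gh,h)=|g|$, which matches the action $(hF)(g)=F(gh)$ used throughout the paper.

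For the easy direction (2)$\Rightarrow$(1), suppose $\phi$ is $C$-Lipschitz with respect to some length function $|\cdot|$. Then for each fixed $g\in\Gamma$ and any $h\in\Gamma$,
$$|\phi(gh)-\phi(h)|\le C\cdot d(gh,h)=C|g|,$$
so $\|g\phi-\phi\|_0\le C|g|<\infty$, whence $\phi$ is abstractly Lipschitz.

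For the substantive direction (1)$\Rightarrow$(2), I propose the natural candidate
$$|g|:=\|g\phi-\phi\|_0=\sup_{h\in\Gamma}|\phi(gh)-\phi(h)|,$$
which is finite by the abstract Lipschitz hypothesis. I will verify the three length-function axioms in turn: $|e|=0$ is immediate; symmetry $|g^{-1}|=|g|$ follows by the substitution $h\mapsto gh$ in the supremum; and subadditivity follows from the triangle inequality,
$$|\phi(g_1g_2h)-\phi(h)|\le|\phi(g_1(g_2h))-\phi(g_2h)|+|\phi(g_2h)-\phi(h)|\le|g_1|+|g_2|,$$
taking the supremum over $h$.

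Finally, with this length function, specialising the defining supremum at $k=h$ in $|gh^{-1}|=\sup_k|\phi(gh^{-1}k)-\phi(k)|$ yields $|\phi(g)-\phi(h)|\le|gh^{-1}|=d(g,h)$, so $\phi$ is $1$-Lipschitz with respect to $|\cdot|$. The only mild subtlety is the choice of invariance convention for the metric: had I used a left-invariant metric, the identity $d(gh,h)=|g|$ would fail, and the argument would instead produce a Lipschitz bound involving conjugates; the right-invariant choice is forced by the form of the supremum in the definition of abstractly Lipschitz. No separate technical obstacle arises beyond keeping this convention straight.
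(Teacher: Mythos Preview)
Your proof is correct and follows the same approach as the paper: for (1)$\Rightarrow$(2) you use the candidate length function $|g|_\phi=\|g\phi-\phi\|_0$, and for (2)$\Rightarrow$(1) you bound $\|g\phi-\phi\|_0$ by the Lipschitz constant times $|g|$. You supply details the paper omits, namely the verification that $|g|_\phi$ satisfies the length-function axioms and the discussion of the right-invariance convention, but the underlying strategy is identical.
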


\begin{proof}
Assume (1). Then $\phi$ is Lipschitz with respect to
$$
|g|_\phi=\|g\phi-\phi\|_0.
$$

Assume (2).  Then $\|g\phi-\phi\|_0\leq |g|$.
\end{proof}
Notice that if $\Gamma$ is finitely generated then abstractly Lipschitz
functions are Lipschitz with respect to the word norm associated
with a finite generating set.

The second construction goes as follows.
Let $\omega\in C^2_{(1)}(\Gamma;\B R)$ 
be an arbitrary 1-bounded 2-cocycle.  For a function $\psi\in\ell^\infty\Gamma$
define $|\psi|_\R{osc}=\sup_{h,h'\in\Gamma}|\psi(h)-\psi(h')|$ and define $\|g\|_\omega=|\omega(g,\cdot)|_\R{osc}$.

\begin{proposition}
The function $\|\cdot\|_\omega$ is a well defined metric on $\Gamma$.  Moreover, if $\eta$ is a bounded 1-cochain,
then $\|\cdot\|_\omega$ and $\|\cdot\|_{\omega+\delta\eta}$ are within finite (at most $2|\eta|_\R{osc}$) distance.
\end{proposition}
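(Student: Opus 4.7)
The plan is to verify the three length-function axioms for $\|\cdot\|_\omega$ by short applications of the cocycle identity $\delta\omega=0$, and then to estimate the perturbation by $\delta\eta$ by expanding and using subadditivity of the oscillation.

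First I would note that $\|g\|_\omega$ is finite: the $1$-boundedness of $\omega$ makes $\omega(g,\cdot)$ (or the appropriate translate thereof) represent a class in $\ell^\infty\Gamma/\B R$, whose quotient norm is precisely the oscillation. Each of the three length-function axioms then comes from a single well-chosen instance of the cocycle identity. For $\|1\|_\omega=0$, the identity $\delta\omega(1,1,h)=0$ forces $\omega(1,h)$ to be constant in $h$. For $\|g^{-1}\|_\omega=\|g\|_\omega$, I would apply $\delta\omega(g,g^{-1},h)=0$ to express $\omega(g^{-1},h)$, modulo terms that do not depend on $h$, as $-\omega(g,g^{-1}h)$; the change of variable $h\mapsto g^{-1}h$ then reparametrises the supremum and yields the equality. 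For the triangle inequality $\|gh\|_\omega\leq\|g\|_\omega+\|h\|_\omega$, I would use $\delta\omega(g,h,k)=0$ to write $\omega(gh,k)=\omega(h,k)+\omega(g,hk)-\omega(g,h)$, and then subtract the analogous expression at $k'$, splitting the difference into two pieces dominated by $\|h\|_\omega$ and $\|g\|_\omega$ respectively.

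For the perturbation estimate, I would expand $\delta\eta(g,h)-\delta\eta(g,h')=(\eta(h)-\eta(h'))-(\eta(gh)-\eta(gh'))$, producing two differences of $\eta$ each bounded by $|\eta|_\R{osc}$. Adding this to $\omega(g,h)-\omega(g,h')$ and taking the supremum over $h,h'$ gives $\|g\|_{\omega+\delta\eta}\leq\|g\|_\omega+2|\eta|_\R{osc}$; the reverse inequality follows by replacing $\eta$ with $-\eta$.

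No individual step is genuinely hard; the work lies in choosing the correct triple for each cocycle identity so that the unwanted terms either cancel or reparametrise into the desired supremum. The only real subtlety is confirming the finiteness of $\|g\|_\omega$ from the $1$-boundedness hypothesis; all the rest is formal algebraic manipulation with $\delta\omega=0$ and the triangle inequality for $|\cdot|_\R{osc}$.
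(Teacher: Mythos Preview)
Your proposal is correct and follows essentially the same route as the paper: the triangle inequality via the cocycle identity $\delta\omega(g,h,k)=0$ is exactly what the paper does, and your perturbation estimate is what the paper dismisses as ``obvious''. You are in fact more thorough than the paper, which omits the verifications of $\|1\|_\omega=0$, $\|g^{-1}\|_\omega=\|g\|_\omega$, and finiteness that you supply.
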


\begin{proof}
By cocycle identity
\begin{align*}
\omega(g_1g_2,h)-\omega(g_1g_2,h')
&=\omega(g_2,h)+\omega(g_1,g_2h)-\omega(g_1,g_2)\\
&\hphantom{=}-\omega(g_2,h')-\omega(g_1,g_2h')+\omega(g_1,g_2)\\
&=\omega(g_2,h)-\omega(g_2,h')+\omega(g_1,g_2h)-\omega(g_1,g_2h').
\end{align*}
Taking supremum of the absolute value of the above over all $h$ and $h'$ we get
the triangle identity.

To show that $\|\cdot\|_\omega$ is symmetric first write
$$0=\delta\omega(1,1,h)=\omega(1,h)-\omega(1,h)+\omega(1,h)-\omega(1,1),$$
thus $\omega(1,h)$ does not depend oh $h$.  Then
$$0=\delta\omega(g^{-1},g,h)=\omega(g,h)-\omega(1,h)+\omega(g^{-1},gh)-\omega(g^{-1},g),$$
hence
$$\omega(g,h)+\omega(g^{-1},gh)=\omega(1,1)+\omega(g^{-1},g)=\omega(g,k)+\omega(g^{-1},gk),$$
or, equivalently
$$\omega(g,h)-\omega(g,k)=\omega(g^{-1},gk)-\omega(g^{-1},gh).$$
Taking the supremum on both sides we derive $\|g\|_\omega=\|g^{-1}\|_\omega$.

The last statement is obvious.
\end{proof}

One can rephrase the above statement by saying that every element of $H^2_{(1)}(\Gamma,\B R)$ defines
a class of metrics on $\Gamma$ which are within finite distance from each other.

In \cite[Theorem 5.2]{zbMATH06065373} we essentially prove that a certain
1-bounded 2-cocycle $\F G$ defines a metric on the group of Hamiltonian
diffeomorphisms $\operatorname{Ham}(X,\sigma)$ of a symplectically hyperbolic
manifold $(X,\sigma)$ and prove that each element of this group is undistorted
with respect to $\|\cdot\|_{\F G}$.  Notice, that in \cite{zbMATH06065373} the
function $\|\cdot\|_{\F G}$ was defined slightly differently and did not
satisfy triangle inequality (cf. \cite[Lemma 4.1]{zbMATH06065373}).

Let us compare the two constructions.  Given a 1-bounded 2-cocycle in the
kernel of the comparison map, we defined two norms $\|\cdot\|_{\delta\phi}$
and $|\cdot|_{\phi}$.  Since

\begin{align*}
\delta\phi(g,h)-\delta\phi(g,h')
&=\phi(g)-\phi(gh)+\phi(h)-\phi(g)+\phi(gh')-\phi(h')\\
&=(g\phi-\phi)(h')-(g\phi-\phi)(h),
\end{align*}
we see that $\|g\|_{\delta\phi}=|g\phi-\phi|_\R{osc}\leq2|g|_\phi$.


\section{Hyperbolic classes}\label{S:hyp classes}

Let $p\colon\widetilde X\to X$ be the universal cover of a CW-complex $X$ and let $\Gamma=\pi_1X$.
Let $C^*(\widetilde X)$ and $C^*(X)$ denote CW-cochains on $\widetilde X$ and $X$, respectively. 
The cochains are considered with trivial real coefficients.
We say
that a cochain $\alpha\in C^n(\widetilde X)$ is \emph{tamed} by $\nu\in C^n(\widetilde X)$  
if for every $g\in\Gamma$ and every cell $\triangle$ in $\widetilde X$
$$
|(g\alpha-\alpha)(\triangle)|\leq \nu(\triangle).
$$
Clearly, a cochain is equivariant (it is a lift of a cochain from $C^*(X)$)
if and only if it is tamed by a cochain vanishing identically.

We say that a cocycle $\alpha\in C^n(X)$ is \emph{hyperbolic} if $p^*\alpha$ can be expressed
as  $\delta\beta$ with $\beta$ tamed by some cochain lifted from $X$.
A class in $H^n(X;\B R)$ is \emph{hyperbolic} if it has a hyperbolic representative.

The notion of a hyperbolic class is inspired by Gromov \cite{zbMATH04186563}
who considered differential forms $\alpha$ with the property that the
lift $p^*\alpha$ to the universal cover has a primitive that is bounded with
respect to the induced Riemannian metric (see also the recent paper
of Ascari and Milizia \cite{zbMATH07862377} for new developments). 
By integrating such classes over
cells we get that they yield hyperbolic classes in our sense.
Indeed, if the $n$-skeleton of $X$ is finite, then any homogeneous cochain takes
only finitely many values. Thus, $a\in H^n(X;\B R)$ is hyperbolic if one can
choose $\beta\in C^{n-1}(\widetilde X;\B R)$ with $[\delta\beta]=p^*a$, and
$M\in\B R$, such that for any cell $\triangle$ in $\widetilde X$, $|\beta(\triangle)|<M$.

Hyperbolicity as defined above depends on the cellular structure if $X$ is not compact.
We need it so we can speak of hyperbolic classes in $\R B\Gamma$.
Clearly, hyperbolicity is preserved by cellular maps.

\begin{proposition}\label{P:subdivision}Hyperbolicity does not
change under subdivisions of a given CW-structure.
\end{proposition}

\begin{proof}
Let  $X'$ be a subdivision of $X$.  Wherever we write that a cell in $X'$ divides a cell in $X$
we implicitly assume that they are of the same dimension.

Maps inducing the natural isomorphism on $H^*(X;\B R)$ and $H^*(X';\B R)$ are induced by
$q\colon C^*(X;\B R)\to C^*(X';\B R)$ and $b\colon C^*(X';\B R)\to C^*(X;\B R)$ defined
as $q\alpha(\triangle')=\frac1{n_\triangle}\alpha(\triangle)$, where $\triangle$ is the cell
which $\triangle'$ divides and $n_\triangle$ denote the
number of cells in $X'$ dividing $\triangle$,
and $b\alpha'(\triangle)=\sum\alpha'(\triangle')$, where the sum runs over all cells $\triangle'$ in $X'$
dividing $\triangle$.
We denote corresponding maps for $\widetilde X$ and its induced subdivision $\widetilde X'$ by the same letters.
We observe, in particular, that they commute with~$p^*$.

If $\alpha=p^*\delta\beta$ and $\beta$ is tamed by $\nu$ then $q\alpha=p^*\delta q\beta$ and $q\beta$ is tamed by $q\nu$.
Similarly, if $\alpha'=p^*\delta\beta'$ and $\beta'$ is tamed by $\nu'$ then $b\alpha'=p^*\delta b\beta'$ and $b\beta'$ is tamed by $b\nu'$.
%
%
%
%
\end{proof}

The aim of this section is to prove the following connection between
hyperbolic and 1-bounded classes.

\begin{theorem}\label{T:hyperbolic}
Let $X$ be a finite connected CW--complex with $\pi_1(X)=\Gamma$ and let
$c\colon X\to\R B\Gamma$ be the classifying map.  Then a pullback of a
1-bounded class is hyperbolic.  Moreover, if the universal cover of\/ $X$
is $(n-1)$-connected and $a\in H^n(X;\B R)$ is a hyperbolic class,
then there exists a 1-bounded cocycle 
$\omega \in C^n_{(1)}(\R B\Gamma;\B R)$ such that $c^*[\omega]=a$.
\end{theorem}

\begin{remark}
We do not know wheather one can replace the assumption on higher connectivity
of $\widetilde X$ by assuming that $a$ lies in the image of $c$.
\end{remark}

Until the end of this section  we use the homogeneous notation, since
we need to talk about chains on $\R E\Gamma$.

We say that a $(n-1)$-cochain $\lambda$ \emph{controls} an $n$-cochain $\omega$
if for all $h,g_1,\dots,g_n\in\Gamma$
\begin{equation}
|\omega(h,g_1,\dots,g_n)-\omega(1,g_1,\dots,g_n)|<\lambda(g_1,\dots,g_n).
\label{Eq:control}
\end{equation}
By definition, a homogeneous cochain $\omega$ is 1-bounded if it is controlled
by a homogeneous cochain.

It is straightforward to verify that the map 
$H\colon C^n(\R E\Gamma;\B R)\to C^{n-1}(\R E\Gamma;\B R)$
given by the formula $(H\omega)(g_1,\dots,g_n)=\omega(1,g_1,\dots,g_n)$ is a homotopy
satisfying $\delta H\omega+H\delta\omega=\omega$ (see \cite[Page 18]{zbMATH03935317}).

\begin{lemma}\label{L:controlled-tamed}
Assume that $\omega\in C^n(\R E\Gamma;\B R)$ is closed, $\Gamma$-invariant
and controlled by $\lambda$.  Then $H\omega$ is tamed by $\lambda$.
\end{lemma}

\begin{proof}
\begin{align*}
|(gH\omega-H\omega)(g_1\dots,g_n)|
&=|H\omega(g_1g,\dots,g_ng)-H\omega(g_1,\dots,g_n)|\\
&=|\omega(1,g_1g,\dots,g_ng)-\omega(1,g_1,\dots,g_n)|\\
&=|\omega(g^{-1},g_1,\dots,g_n)-\omega(1,g_1,\dots,g_n)|\\
&\leq\lambda(g_1,\dots,g_n).
\end{align*}
\end{proof}

Given $\eta\in C^{n-1}(\R E\Gamma)$ define $\bar\eta\in C^{n-1}(\R E\Gamma)^\Gamma$
by the formula
$$
\bar\eta(g_1,\dots,g_n)=\eta(1,g_2g_1^{-1},\dots,g_ng_1^{-1}).
$$

\begin{lemma}\label{L:tamed-controlled}
Assume that $\eta\in C^{n-1}(\R E\Gamma;\B R)$ is tamed by $\nu$
and $\delta\eta$ is $\Gamma$-invariant
then $\delta(\eta-\bar\eta)$ is controlled by $\nu$.
\end{lemma}

\begin{proof}
Since $\delta(\eta-\bar\eta)$ is $\Gamma$-invariant,
\begin{align*}
\delta(\eta-\bar\eta)(h,g_1,\dots,g_n)
eta&=\delta(\eta-\bar\eta)(1,g_1h^{-1},\dots,g_nh^{-1})\\
&=\eta(g_1h^{-1},\dots,g_nh^{-1})-\eta(1,g_2h^{-1},\dots,g_nh^{-1})\pm\cdots\\
&\hphantom{=}-\eta(1,g_2g_1^{-1}\dots,g_ng_1^{-1})+\eta(1,g_2h^{-1},\dots,g_nh^{-1})\mp\cdots\\
&=\eta(g_1h^{-1},\dots,g_nh^{-1})-\eta(1,g_2g_1^{-1},\dots,g_ng_1^{-1})\\
&=(h^{-1}\eta-g_1^{-1}\eta)(g_1,\dots,g_n).
\end{align*}
Thus, since, by assumption, $\eta$ is tamed by $\nu$,
\begin{align*}
|\delta(\eta-\bar\eta)(h,g_1,\dots,g_n)-\delta(\eta-\bar\eta)(1,g_1,\dots,g_n)|
&=|(h^{-1}\eta-\eta)(g_1,\dots,g_n)|\\
&\leq\nu(g_1,\dots,g_n).
\end{align*}
\end{proof}

\begin{corollary}\label{C:hyperbolic-1-bounded}
Let $a\in H^n(\R B\Gamma;\B R)$.  Then $a$ is hyperbolic if and only if $a$ has
1-bounded representative.
\end{corollary}

\begin{proof}[Proof of Theorem \ref{T:hyperbolic}]
Let $\omega$ be a 1-bounded cocycle.  Then $[\omega]$ is hyperbolic,
so, by the above corollary, is $c^*[\omega]$.

Let $a\in H^n(X;\B R)$ be hyperbolic.  Taking $\widetilde Y$,
an $\Gamma$-invariant subdivision of the $k$-skeleton of the simplicial
complex $\R E\Gamma$, we may build an equivariant cellular map $\widetilde f\colon\widetilde Y\to\widetilde X$.
Let $f\colon Y=\widetilde Y/\Gamma\to X$ be the corresponding map of the quotients.

Let $\alpha \in C^k(X;\B R)$ be a hyperbolic cocycle and let
$p^*(\alpha)=\delta\beta$, where $\beta$ is a tamed cochain.
Define $\eta=b(\widetilde f^*\beta)$.  By Proposition \ref{P:subdivision},
$\eta$ is tamed.  By Lemma \ref{L:tamed-controlled} $\delta(\eta-\overline\eta)$ is controlled.

The equivariant cocycle $\delta(\eta-\overline\eta)$ is a lift of a cocycle 
$\omega\in C^k(\R B\Gamma;\B R)$.
Being controlled for $\delta(\eta-\overline\eta)$ translates to 
$1$-boudedness of $\omega$; see comment after \eqref{Eq:control}.
Since $\overline\eta$ is a lift of a cocycle from $\R B\Gamma$ we see that
$[\omega]=f^*[\alpha]$.
Since $c\circ f$ is homotopic to the inclusion $Y\to \R B\Gamma$,
it follows that $c^*[\omega]=[\alpha]$
which finishes the proof.
\end{proof}


\section{Hyperbolic classes and amenability}

It immediately follows from Theorems \ref{T:comparison} and \ref{T:hyperbolic} that
any hyperbolic class on a CW--complex $X$ as above is trivial if $\pi_1(X)$ is
amenable. However, a more direct proof yields the following slightly stronger
observation originally due to Brunnbauer Kotschick \cite[Theorem
3.2]{arxiv.0808.1482} who used an isoperimetric characterisation of manifolds
with amenable fundamental group due to Brooks.
A different argument was used in \cite{zbMATH05597817} to show that the
fundamental group of a closed symplectically hyperbolic manifold cannot be
amenable.

\begin{proposition}\label{P:BK}
Let $X$ be a finite complex and let $\alpha$ be a hyperbolic cocycle on $X$.
If the fundamental group of $X$ is amenable, then $\alpha$ is a coboundary.
\end{proposition}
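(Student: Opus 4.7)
The plan is a direct averaging argument, bypassing Theorems \ref{T:comparison} and \ref{T:hyperbolic}. By the definition of a hyperbolic cocycle, $p^*\alpha=\delta\beta$ for some bounded cellular cochain $\beta\in C^{k-1}(\widetilde X;\B R)$, and $\Gamma=\pi_1(X)$ acts freely and cellularly on $\widetilde X$ with quotient $X$. The goal is to replace $\beta$ by a $\Gamma$-equivariant bounded primitive of $p^*\alpha$, which will then descend to an explicit primitive of $\alpha$ on $X$.

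Using amenability, fix an invariant mean $\F m$ on $\Gamma$ and define
$$
\overline\beta(\sigma)=\int\beta(g^{-1}\sigma)\,\F m(g)
$$
for every $(k-1)$-cell $\sigma$ of $\widetilde X$. Since $\beta$ is bounded, the function $g\mapsto \beta(g^{-1}\sigma)$ lies in $\ell^\infty\Gamma$ for each fixed $\sigma$, so $\overline\beta$ is well defined and itself bounded. Invariance of $\F m$ translates directly into $\Gamma$-equivariance of $\overline\beta$; as in the proof of Theorem \ref{T:comparison}, the precise side of invariance needed is dictated by the variance convention, but amenability supplies both.

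Because the cellular codifferential is a fixed linear combination of face operators, each of which is $\Gamma$-equivariant, $\delta$ commutes with the averaging operator $\beta\mapsto\overline\beta$. Since $p^*\alpha$ is already $\Gamma$-invariant, averaging leaves it unchanged, so
$$
\delta\overline\beta=\overline{\delta\beta}=\overline{p^*\alpha}=p^*\alpha.
$$
Since $\overline\beta$ is $\Gamma$-invariant, it is the pullback $p^*\gamma$ of a unique cellular cochain $\gamma\in C^{k-1}(X;\B R)$, and injectivity of $p^*$ on cochains then forces $\delta\gamma=\alpha$, so $\alpha$ is a coboundary.

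There is no serious obstacle here. The only point requiring genuine care is that the global boundedness of $\beta$ on $\widetilde X$ is precisely what makes integration against $\F m$ meaningful; it is this global, rather than fibrewise, control that lets us promote the equation $\delta\beta=p^*\alpha$ from the universal cover down to $X$ itself. Note also that no connectivity assumption on $\widetilde X$ is needed, which is why this argument is strictly stronger than the combination of Theorems \ref{T:comparison} and \ref{T:hyperbolic}.
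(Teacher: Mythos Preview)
Your proof is correct and is essentially identical to the paper's own argument: both average the bounded primitive $\beta$ against an invariant mean on $\Gamma$ to obtain a $\Gamma$-invariant primitive of $p^*\alpha$, which then descends to a primitive of $\alpha$ on $X$. You have simply spelled out in full the details that the paper compresses into a single sentence (``By averaging this equation with respect to a mean on the deck transformation group $\pi_1(X)$ we find $\beta$ which is $\pi_1(X)$--invariant\dots'').
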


\begin{proof}
Let $p^*\alpha=\delta \beta$ with $\beta$ bounded cochain on $\widetilde X$.
By averaging this equation with respect to a mean on the deck transformation
group $\pi_1(X)$ we find $\beta$ which is $\pi_1(X)$--invariant, thus
$\beta$ is a lift of a cocycle $\beta'$ on $X$ and $\alpha=\delta\beta'$
which proves the claim.
\end{proof}


\section{Test case: Thompson's group $F$}\label{S:test}
It is a stimulating open question to determine whether Thompson's group $F$ is
amenable. Since it is a split extension 
$$
[F,F]\to F\to \B Z^2,
$$
its amenability is equivalent to the amenability of its commutator subgroup.
Notice that the commutator subgroup $[F,F]$ is boundedly acyclic due to
a recent result of Campagnolo, Fournier-Facio, Lodha and Moraschini
\cite[Corollary 1.14]{2311.16259}.
The cohomology ring $H^*([F,F];\B R)$ is isomorphic to a polynomial ring $\B R[u]$, where $\deg(u)=2$.
The explicit cocycle $\alpha$ (defined in \eqref{Eq:g-s} below) 
representing $u$ has been given by Ghys and Sergiescu
\cite[Corollaire~4.5]{zbMATH04056610}.
Let $\B Z^2\to [F,F]$ be an injective
homomorphism generated by two commuting nontrivial elements.
It is easy to choose such elements so that the pull-back of $u$ is nontrivial
in $H^2(\B Z^2;\B R)$ and, since $\B Z^2$ is amenable, it can't be represented
by a 1-bounded cocycle. This is explained in the first step of the proof of the
following general observation
which provides further evidence for a possible amenability of $F$.

Let $\alpha \colon F'\times F'\to \B R$ be a cocycle defined by the
following formula
\begin{equation}
\alpha(f,g) = \sum_x \det
\left( 
\begin{smallmatrix}
log_2\ g'_L(x)\ &\ log_2\ (f\circ g)'_L(x)\\
log_2\ g'_R(x)\ &\ log_2\ (f\circ g)'_R(x)\\
\end{smallmatrix}
 \right),
\label{Eq:g-s}
\end{equation}
where the summation is over the breakpoints. The subscripts $L$ and $R$
refer to the left and right derivatives. This cocycle represents
a generator of the cohomology ring $H^*(F';\B R)=\B R[u]$.

\begin{proposition}\label{P:thompson}
For every positive integer $n\in \B N$ there exists an injective homomorphism
$\psi\colon \B Z^{2n}\to [F,F]$ such that $\psi^*(u^n)\neq 0$
in $H^{2n}(\B Z^{2n};\B R)$. Consequently, $u^n$ cannot be represented
by a 1-bounded cocycle.
\end{proposition}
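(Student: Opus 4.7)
The plan is to realise $\psi$ as a product of $n$ localised copies of the $n=1$ construction performed on pairwise disjoint dyadic subintervals of $[0,1]$, and then exploit perfectness of $[F,F]$ to kill Künneth cross terms in degree two.

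First I would choose pairwise disjoint closed dyadic subintervals $I_1,\dots,I_n\subset(0,1)$ and, for each $i$, let $F'_i\subset[F,F]$ be the subgroup of elements supported in $I_i$ with trivial one-sided germs at $\partial I_i$. A change of dyadic coordinates identifies $F'_i$ with $[F,F]$, so in particular $F'_i$ is perfect and $H^1(F'_i;\B R)=0$. Since the $I_i$ are pairwise disjoint, distinct $F'_i$'s commute and one gets an injective homomorphism $j\colon F'_1\times\cdots\times F'_n\hookrightarrow[F,F]$. The Ghys--Sergiescu cocycle is local (read off from its explicit form in \cite{zbMATH04056610}), so its restriction $u|_{F'_i}$ is a nonzero multiple of the corresponding class of the copy $F'_i\cong[F,F]$. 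Invoking the $n=1$ case noted just before the proposition inside $F'_i$ supplies an injective $\psi_i\colon\B Z^2\hookrightarrow F'_i$ with $\psi_i^*(u|_{F'_i})\neq0$, and I set $\psi:=j\circ(\psi_1\times\cdots\times\psi_n)$; injectivity is clear since a nontrivial kernel element would restrict on $I_i$ to a nontrivial kernel element of $\psi_i$.

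Next I would compute $\psi^*(u^n)$ via Künneth. Because $H^1(F'_i;\B R)=0$, Künneth gives
$$
H^2(F'_1\times\cdots\times F'_n;\B R)=\bigoplus_{i=1}^n H^2(F'_i;\B R)
$$
with no $H^1\otimes H^1$ cross terms, so $j^*(u)=\sum_i u'_i$, where $u'_i$ is the class corresponding to $u|_{F'_i}$ under the $i$-th Künneth inclusion. Expanding the cup product,
$$
j^*(u^n)=\sum_{k_1+\dots+k_n=n}\binom{n}{k_1,\dots,k_n}(u'_1)^{k_1}\cdots(u'_n)^{k_n},
$$
and pulling back further by $\psi_1\times\cdots\times\psi_n$, every summand with some $k_i\geq 2$ dies because $H^{\geq3}(\B Z^2;\B R)=0$. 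Only the fully-diagonal term $k_1=\cdots=k_n=1$ survives, giving $\psi^*(u^n)=n!\,\psi_1^*(u|_{F'_1})\smile\cdots\smile\psi_n^*(u|_{F'_n})$, which is nonzero in $H^{2n}(\B Z^{2n};\B R)$ by Künneth and the choice of the $\psi_i$.

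The ``consequently'' is then immediate from Theorem \ref{T:comparison}: a 1-bounded representative of $u^n$ would pull back to a 1-bounded representative of $\psi^*(u^n)\in H^{2n}_{(1)}(\B Z^{2n};\B R)$, but the latter maps to zero in $H^{2n}(\B Z^{2n};\B R)$ by amenability of $\B Z^{2n}$ and Theorem \ref{T:comparison}, contradicting $\psi^*(u^n)\neq0$. The step I expect to be the main obstacle is verifying that $u|_{F'_i}\neq 0$, i.e.\ that the Ghys--Sergiescu class really does restrict nontrivially to each local copy $F'_i\cong[F,F]$; this should follow from the locality of the cocycle in \cite{zbMATH04056610} combined with the coordinate-change isomorphism (alternatively, by conjugating the $n=1$ example in $[F,F]$ by a suitable element of $F$ that carries its support into $I_i$), after which the Künneth bookkeeping is routine.
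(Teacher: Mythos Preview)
Your argument is correct and, at the geometric level, is the same construction the paper uses: placing $n$ disjoint copies of the $n=1$ example on dyadic subintervals is exactly the iterated $*$-product $f*1*\cdots*1$, $g*1*\cdots*1$, \dots, $1*\cdots*1*f$, $1*\cdots*1*g$ of \cite{zbMATH05056625}.

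The genuine difference is in how the nonvanishing of $\psi^*(u^n)$ is verified. The paper evaluates the explicit Ghys--Sergiescu cocycle $\alpha$ on the generating cycle $\zeta_{2n}$ by hand (getting $\langle\psi^*\alpha,\zeta_2\rangle=2$, $\langle\psi^*\alpha^2,\zeta_4\rangle=8$) and then invokes an inductive doubling. You instead invoke that $[F,F]$ is perfect, so $H^1(F'_i;\B R)=0$ kills the K\"unneth cross terms in degree two, and then use $H^{\geq3}(\B Z^2;\B R)=0$ to reduce $(\sum u'_i)^n$ to the diagonal term $n!\,u'_1\smile\cdots\smile u'_n$. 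This is cleaner and handles all $n$ uniformly, whereas the paper's write-up literally only reaches powers of two; what you lose is the explicit numerics of the pairing. Your flagged ``main obstacle'' is easily handled by the conjugation variant you mention: for $h\in F$ carrying the support into $I_i$, the automorphism $c_h$ of $[F,F]$ acts on $H^2([F,F];\B R)\cong\B R$ by a nonzero scalar, so $\psi_i^*u=(c_h\circ\psi_0)^*u$ is a nonzero multiple of $\psi_0^*u\neq0$; no locality analysis of the cocycle is strictly needed.
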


\begin{proof}
Let $F'=[F,F]$ denote the commutator subgroup of Thompson's group~$F$
and let $f,g\in F'$ be elements defined by the following pictures.
\begin{center}
\begin{tikzpicture}[line width=2pt, scale=0.4]
\draw[line width=.1pt, color=gray] (0,0) grid (8,8);
\draw[blue] (0,0)--(1,1)--(3,2)--(4,4)--(8,8);
\draw[blue](4,-1) node {$f$}; 
\draw[line width=.1pt, color=gray, xshift=10cm] (0,0) grid (8,8);
\draw[red,xshift=10cm] (0,0)--(4,4)--(5,6)--(7,7)--(8,8);
\draw[red,xshift=10cm](4,-1) node {$g$}; 
\draw[line width=.1pt, color=gray, xshift=20cm] (0,0) grid (8,8);
\draw[xshift=20cm] (0,0)--(1,1)--(3,2)--(4,4)--(5,6)--(7,7)--(8,8);
\draw[xshift=20cm](4,-1) node {$f\circ g=g\circ f$}; 
\end{tikzpicture}
\end{center}
Let $\psi\colon \B Z^2\to F'$ be an injective homomorphism defined by 
$$
\psi(e_1)=f\text { and } \psi(e_2)=g,
$$
where $e_1=(1,0)$ and $e_2=(0,1)$.  It is a direct computation that
$\alpha(f,g)=1$ and that $\alpha(g,f)=-1$ (only the breaking point $(1/2,1/2)$
contributes). Since $\zeta_2 = (e_1|e_2)-(e_2|e_1)$ is a cycle representing
nontrivial class in $H_2(\B Z^2;\B R)$ we get that 
$\langle \psi^*\alpha,\zeta_2\rangle=2$ and hence $\psi^*(u)\neq 0$ in $H^2(\B Z^2;\B R)$.
Since $\B Z^2$ is amenable, $u$ cannot be represented by a 1-bounded cocycle.
We use here the bar notation for non-homogeneous chains 
(see Brown \cite[Section II.3]{zbMATH03935317}).

Consider $\psi\colon \B Z^{2n}\to F'$ generated by
$$
f_1,g_1,\ldots,f_n,g_n,
$$
where $f_i$ and $g_i$ are supported on $\left[ \frac{i-1}{n},\frac{i}{n} \right]$
and are rescaled copies of $f$ and $g$ discussed above. Observe that
\begin{align*}
\psi^*\alpha(f_i,g_i) &= 2\\
\psi^*\alpha(f_i,g_j) &= 
\psi^*\alpha(f_i,f_j) = 
\psi^*\alpha(g_i,g_j) = 0 &\text{ for $i\neq j$}. 
\end{align*}
It follows that cocycle $\psi^*\alpha\colon \B Z^{2n}\times \B Z^{2n}\to \B R$ is
a symplectic form and hence its $n$-th power represents a non-trivial
cohomology class in $H^{2n}(\B Z^{2n};\B R)$. That is, $[\psi^*\alpha]^n=
\psi^*[\alpha]^n=\psi^*(u^n)\neq 0$. Since $\B Z^{2n}$ is amenable, $u^n$ cannot
be represented by a $1$-bounded cocycle.
\end{proof}

\subsection*{Acknowledgements}
The first author was partially supported
by Polish \MakeLowercase{\textsc{NCN}} grant 
\MakeLowercase{\textsc{2017/27/B/ST1/01467}}.
This work of the second author was funded by Leverhulme Trust Research Project
Grant \MakeLowercase{\textsc{RPG-2017-159}}.

The authors would like to thank Collin Bleak for discussions about Thompson's
group $F$ and Francesco Fournier-Facio for bringing their attention to the
literature on weakly bounded cocycles.

\bibliography{semibounded}
\bibliographystyle{acm}

\bigskip
\newcommand{\email}[1]{\href{mailto:#1}{{\color{red}\tt #1}}}
\scriptsize
\begin{tabbing}
\hspace{.5\hsize}\=\hspace{.5\hsize}\kill
\textsc{Światosław R.~Gal}:\>\textsc{Jarek Kędra}:\\
Instytut Matematyczny\>Mathematical Sciences\\
Uniwersytet Wrocławski\>University of Aberdeen\\
pl. Grunwaldzki 2\>Fraser Noble Building\\
50-384 Wrocław\>Aberdeen \MakeLowercase{\textsc{AB243UE}}\\
Poland\>Scotland, \MakeLowercase{\textsc{UK}}\\
\email{sgal@mimuw.edu.pl}\>\email{kedra@abdn.ac.uk}
\end{tabbing}

\end{document}